\documentclass{article}

\usepackage[utf8]{inputenc} 
\usepackage[T1]{fontenc}    
\usepackage{hyperref}       
\usepackage{url}            
\usepackage{lipsum}
\usepackage{fancyhdr}       
\usepackage{graphicx}       
\usepackage{amssymb}
\usepackage{tikz}
\usetikzlibrary{decorations.pathreplacing,calligraphy}
\usepackage{caption}
\usepackage{subcaption}
\usepackage{hyperref}
\usepackage{amsthm}

\newtheorem{thm}{Theorem}

\theoremstyle{definition}
\newtheorem{rmk}{Remark}

\thispagestyle{empty}

\usepackage{amsmath} 

\newcommand{\dist}{\textup{dist}}
\newcommand{\diag}{\textup{diag}}

\newtheorem{definition}{Definition}

\usepackage{algpseudocode}
\usepackage{algorithm}
\usepackage{here}

\newcommand{\R}{\mathbb R}
\newcommand{\N}{\mathbb N}
\newcommand{\bfzero}{\mathbf{0}}
\newcommand{\dup}{\textup{d}}
\newcommand{\ddx}[1]{\frac{\dup #1}{\dup x}}

\newcommand{\G}{\mathcal{G}}
\newcommand{\V}{\mathcal{V}}
\newcommand{\E}{\mathcal{E}}
\newcommand{\sumedges}{\sum_{e \in \mathcal{E}}}
\newcommand{\sumedgesincident}{\sum_{e \in \mathcal{E}_v}}
\newcommand{\ellvec}{\boldsymbol{\ell}}
\newcommand{\elle}{\ell_e}
\newcommand{\Ham}{\mathcal{H}}
\newcommand{\Hamu}{\Ham: u \mapsto - \frac{\dup ^2u}{\dup x^2}}
\newcommand{\domHam}{\textup{dom}_{\Ham}}
\newcommand{\domHamNK}{\textup{dom}_{\Ham,\textup{NK}}}

\newcommand{\Ad}{\mathbf{A}}
\newcommand{\Lapl}{\mathbf{L}}
\newcommand{\Deg}{\mathbf{D}}

\newcommand{\normLapl}{\mathcal{L}}

\newcommand{\harmLaplG}{\Delta_{\G}}

\newcommand{\eigvaldiscrete}{\mu}
\newcommand{\eigvec}{\Phi}
\newcommand{\specquantum}{\sigma(\Gamma)}
\newcommand{\NVE}{\left(\frac{k \pi}{\ell} \right)^2}
\newcommand{\NVEe}{\left(\frac{k \pi}{\elle} \right)^2}
\newcommand{\eigvalquantum}{\lambda}
\newcommand{\eigvalquantumsqrt}{\sqrt{\lambda}}
\newcommand{\eigfunc}{\phi}

\newcommand{\eigfunce}{\phi_{e}}
\newcommand{\bfH}{\mathbf{H}}

\newcommand{\NEPvar}{z}
\newcommand{\newtonvar}{j}

\newcommand{\trace}{\textup{trace}}


\usepackage{accents}

\newcommand{\Gammaext}{\tilde{\Gamma}}
\newcommand{\cleangraph}[1]{\textup{clean}(#1)}
\newcommand{\Gammaapprox}{\mathfrak{G}}
\newcommand{\ellapprox}{\mathfrak{l}}

\newcommand{\equilength}{h}
\newcommand{\noequisubdiv}{N}

\newcommand{\Gammaapproxfloorh}[1]{\mathfrak{G}_{\text{fl},#1}}
\newcommand{\Gammaapproxceilh}[1]{\mathfrak{G}_{\text{ce},#1}}

\title{Numerical Computation of Non-Equilateral Quantum Graph Spectra\footnote{This work was supported by HYPATIA.SCIENCE, an initiative for the promotion of young female scientists at the Department of Mathematics and Computer Science of the University of Cologne.}}

\author{
  Chong-Son Dr\"oge, Anna Weller \\
  Department of Mathematics and Computer Science 
  \\
  Faculty of Mathematics and Natural Sciences \\
  University of Cologne, Cologne\\
  \texttt{cdroege@uni-koeln.de, anna.weller@uni-koeln.de} \\
}

\begin{document}
\maketitle

\setlength\parindent{0pt}
\begin{abstract}
In the broad range of studies related to quantum graphs, quantum graph spectra appear as a topic of special interest. 
They are important in the context of diffusion type problems posed on metric graphs. 
Theoretical findings suggest that quantum graph eigenvalues can be found as the solutions of a nonlinear eigenvalue problem, and in the special case of equilateral graphs, even as the solutions of a linear eigenvalue problem on the underlying combinatorial graph.
The latter, remarkable relation to combinatorial graph spectra will be exploited to derive a solver for the general, non-equilateral case.
Eigenvalue estimates from equilateral approximations will be applied as initial guesses in a Newton-trace iteration to solve the nonlinear eigenvalue problem.
\end{abstract}


\textbf{Keywords:}
Quantum Graph Spectra, 
Relation to Combinatorial Graphs,
Nonlinear Eigenvalue Problem, 
Newton-trace Iteration, 
Equilateral Approximation,
Neumann-Kirchhoff Conditions

\section{Introduction}\label{sec:intro}
The application of quantum graphs in physics and engineering dates back to the 1930s (\cite{Pauling_1936}, \cite{Berkolaiko_Kuchment_2013}). 
For example, they appear in the modeling of diffusion type problems on thin, interconnected structures.
These structures can be described in a simplified form as graphs interpreted as topological space, so-called metric graphs. Equipped with a differential operator and vertex coupling conditions, we derive the concept of a quantum graph.
\\
Despite their popularity, the numerical study of related problems is still in its early stages.  
In the context of diffusion problems, the eigenvalues and eigenfunctions of the negative second order derivative play an important role in studying  dynamics on the graph. More precisely, we are interested in the negative second order derivative acting on functions satisfying Neumann-Kirchhoff vertex coupling conditions. The study of these eigenvalues is well-known under the term quantum graph spectrum. The particular difficulty lies in the interconnected structure of (metric) graphs, which is reflected in the quantum graph spectrum.
\\

There is a large body of literature examining the spectrum of quantum graphs from a theoretical point of view. 
In particular, the special situation where all edges in the graph have the same length has been discussed in an early work by van Below \cite{Below_1984}, developing a connection between quantum and combinatorial graph spectra that has also been discussed by several other authors. 
One variant of the relation reduces the spectrum of a quantum graph to the spectrum of the harmonic graph Laplacian matrix of the underlying combinatorial graph \cite{Berkolaiko_Kuchment_2013}. 
This allows, up to some exceptional cases, for computing the quantum graph eigenvalues as well as the eigenfunctions using the solutions of a linear eigenvalue problem.   
\\
In general, we will see that we can relate the spectrum to a so-called nonlinear eigenvalue problem (NEP), the dimension of which is given by the number of vertices of the graph, denoted by $n$.
The solutions of the NEP can be found by applying, for example, a Newton-trace method with suitable initial guesses.
The main objective of this article is to determine eigenvalue estimates to start the Newton iteration by approximating non-equilateral graphs with equilateral extended graphs.
\\

In fact, it is also possible to reduce the continuous eigenvalue problem to a problem of size $2m \times 2m$ where $m$ is the number of edges of the graph. 
In this context, the numerical solution of a secular equation related to a bond scattering matrix has been studied using a spectral counting function \cite{Schanz_2006}.
In \cite{Brio_2022}, the authors use yet another formulation of the NEP.
For the solution, they apply a line minimization algorithm to intervals that are detected by plotting the reciprocal condition number of the matrix-valued function and graphically estimating the minimum spacing between the roots. 
\\
However, since typically $m \gg n$, the dimension of these problems exceeds that of the NEP consulted in this article. Moreover, working with equilateral approximations, we wanted to pursue an approach that takes advantage of the topological structure of the metric graph as well as the reduction to a linear eigenvalue problem in the equilateral case.
\\

The structure of the article is as follows. 
Metric and quantum graphs as well as the notation used in connection with classical combinatorial graphs are introduced in Section \ref{sec:background}.
The formal definition of quantum graph spectra and their relation to nonlinear eigenvalue problems are outlined in Section \ref{sec:quantum_graph_spectra}. 
In Section \ref{sec:equil_approximations}, we further reduce the NEP to a linear eigenvalue problem in the special case of equilateral graphs and describe the approximation of non-equilateral graphs by equilateral extended graphs. 
The approximation properties of the eigenvalues of the equilateral approximation as well as the convergence of a Newton-trace method with initial guesses obtained from the latter will then be investigated in the numerical experiments in Section \ref{sec:numerical_results}.
We conclude the article with a summary and ideas for future work.



\section{Background and Notation}\label{sec:background}
%
\subsection{Combinatorial Graphs}
A combinatorial graph $\G:=(\V,\E)$ is defined by a set of vertices $\V:= \lbrace v_1, \ldots, v_n \rbrace$ and a set of edges $\E:= \lbrace e_1, \ldots, e_m \rbrace$ where each edge is a pair of unordered, distinct vertices. 
If two distinct vertices $v_i,v_j$ are connected by an edge $e=(v_i,v_j),$ they are called adjacent and we write $v_i \sim v_j$.
The degree of a vertex $v$ is defined as the number of incident edges and denoted by $\deg(v)$. 
The set of edges incident to $v$ is denoted by $\E_v$.
In the scope of this article, we will always consider undirected, simple and connected graphs.
\\

Under these conditions, a combinatorial graph $\G$ can be described by its adjacency matrix $\Ad := (a_{ij})_{i,j=1, \ldots, n}$ with $a_{ij}=1$ if $v_i \sim v_j$ and $0$ otherwise.
Together with the degree matrix $\Deg := \diag(\deg(v_1), \ldots, \deg(v_n))$ it defines the {graph Laplacian matrix} $\Lapl:= \Deg - \Ad$.
We will later be interested in the \emph{harmonic graph Laplacian matrix} $\harmLaplG : = \Deg^{-1} \Lapl$. The harmonic graph Laplacian matrix $\harmLaplG$ is similar to the well-studied normalized graph Laplacian matrix $\normLapl:= \Deg^{-\frac{1}{2}} \Lapl  \Deg^{-\frac{1}{2}}$. This implies the following theorem on the eigenvalues of $\harmLaplG$, compare for example \cite{Chung_1997}.
\begin{thm}\label{th:eigvals_normalized_laplacian}
The harmonic graph Laplacian matrix $\harmLaplG$ and the normalized graph Laplacian matrix $\normLapl$ are similar with common eigenvalues 
$$
    0 = \eigvaldiscrete_1 < \eigvaldiscrete_2 \le \ldots \le \eigvaldiscrete_{n} \le 2. \quad 
$$
$\eigvaldiscrete_n = 2$ is a simple eigenvalue if and only if $\G$ is bipartite. 
\end{thm}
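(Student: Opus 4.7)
The plan is to derive all spectral properties from the real symmetric matrix $\normLapl$ and then transfer them via similarity. The similarity is immediate by direct computation: since $\G$ is connected, every vertex has positive degree, so $\Deg^{1/2}$ is well-defined and invertible, and
$$ \Deg^{-1/2} \normLapl \Deg^{1/2} \;=\; \Deg^{-1/2}\bigl(\Deg^{-1/2}\Lapl\Deg^{-1/2}\bigr)\Deg^{1/2} \;=\; \Deg^{-1}\Lapl \;=\; \harmLaplG. $$
Thus the two matrices have identical spectra, and it suffices to prove the stated bounds and the bipartite characterization for $\normLapl$.

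I would obtain the lower end $0 = \eigvaldiscrete_1 < \eigvaldiscrete_2$ from the substitution $y = \Deg^{1/2}x$ combined with the standard identity
$$ y^T \normLapl y \;=\; x^T \Lapl x \;=\; \sum_{(v_i,v_j) \in \E}(x_i - x_j)^2 \;\ge\; 0, $$
so $\normLapl \succeq 0$. Equality forces $x$ to be constant on every edge, and by connectedness this means $x$ is a scalar multiple of the all-ones vector. Hence $\ker \normLapl$ is one-dimensional, spanned by $\Deg^{1/2}\mathbf{1}$, proving that $0$ is a simple eigenvalue.

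For the upper end I would turn to the signless counterpart
$$ 2I - \normLapl \;=\; \Deg^{-1/2}(\Deg + \Ad)\Deg^{-1/2}, $$
whose quadratic form equals $\sum_{(v_i,v_j) \in \E}(x_i+x_j)^2 \ge 0$, immediately giving $\eigvaldiscrete_n \le 2$. Equality occurs iff some nonzero $x$ satisfies $x_i = -x_j$ on every edge. This is where I expect the main, though still standard, work to lie: such a sign assignment propagates uniquely along paths, and in a connected graph it is globally consistent iff every cycle has even length, i.e.\ iff $\G$ is bipartite. In the bipartite case with parts $V_+,V_-$ the kernel is spanned by the single vector $\Deg^{1/2}(\mathbf{1}_{V_+} - \mathbf{1}_{V_-})$, so $\eigvaldiscrete_n = 2$ is simple; otherwise the kernel is trivial, so $\eigvaldiscrete_n < 2$ and $2$ is not an eigenvalue at all. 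Either way the stated equivalence holds.
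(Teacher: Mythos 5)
Your proof is correct and complete. The paper does not actually prove this theorem --- it only records the similarity $\harmLaplG = \Deg^{-1/2}\,\normLapl\,\Deg^{1/2}$ in the surrounding text and defers the spectral facts to the cited reference (Chung) --- and your argument is precisely the standard one from that source: nonnegativity of the quadratic forms of $\Lapl$ and of the signless counterpart $\Deg+\Ad$, with connectivity pinning down the kernels and the parity/bipartiteness argument handling the eigenvalue $2$.
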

%
\subsection{Metric and Quantum Graphs}
A metric graph $\Gamma$ is defined by the combinatorial graph $\G=(\V,\E)$ by assigning a length $\elle \in \mathbb{R}^+$ to each edge $e \in \E$. 
The metric graph $\Gamma$ can be interpreted as a metric space where the edges are parameterized by their length $\ell_e$ and coupled at their common vertices. 
In this article, we always consider \emph{compact metric graphs}, meaning metric graphs that have finitely many edges with finite lengths $\elle < \infty$.
\\
A function $u$ on $\Gamma$ is defined as the collection of functions $u_e: [0,\ell_e] \to \R$ on the edges of $\Gamma$, possibly required to fulfill some kind of coupling conditions at the vertices. 
The expression $u(v)$ for $v \in \V$ and thereby the restriction $u_{\V}$ of $u$ to the vertices  is well defined whenever the function $u$ is \textit{continuous} on $\Gamma$.
Continuity in this context means that $u_e$ is continuous on the edges $e \in \E$ and $u_e(v)$ assumes the same value for all edges $e \in \E_{v}$.
For a more intensive discussion of the concept of metric graphs, we refer to \cite{Berkolaiko_Kuchment_2013}. 
\\

Due to its prominence in literature, we introduce the terminology \emph{quantum graph} which refers to a metric graph $\Gamma$ that is equipped with a differential operator $\Ham$ acting on a domain $\domHam$ of functions that fulfill certain coupling conditions at the vertices $v \in \V$. 
In the present article, the differential operator of interest is the negative second order derivative
\begin{equation}\label{eq:Ham_neg_sec_order_derivative}
\Hamu,	
\end{equation}
which should be understood as the negative second order derivative acting on each edge.
Moreover, we consider only the most prominent vertex coupling conditions, known as \emph{Neumann-Kirchhoff conditions}. They are given by the pair
	\begin{subequations}\label{eq:NK}
	\begin{align}
		& u \text{ is continuous on } \Gamma \label{eq:cont_condition} \\
		& \sumedgesincident \ddx{u_e} (v) = 0 \quad \text{for all } v \in \V \label{eq:Neum_Kirch_cond}
	\end{align}
	\end{subequations}
where the derivatives in \eqref{eq:Neum_Kirch_cond} are taken in the direction away from the vertex. 
We thus define
\begin{equation}
    \domHamNK := \bigoplus_{e \in \E} H^2(e) \, \cap \, \lbrace u \text{ fulfills Neumann-Kirchhoff conditions} \rbrace. 
    \vspace{-0.2cm}
\end{equation}
Here, $H^2(e) := H^2([0,\elle])$ denotes the Sobolev space of twice weakly differentiable functions on each edge. 
Neumann-Kirchhoff conditions are, for example, well-suited modeling diffusion problems on metric graphs, since \eqref{eq:cont_condition} models a continuity condition and \eqref{eq:Neum_Kirch_cond} a current conservation condition. 
Under these conditions, $\Ham$ is self-adjoint, see for example \cite{Berkolaiko_Kuchment_2013}, Theorem 1.4.4. 
\\

In the scope of this work, we only consider the differential operator $\Ham$, defined in \eqref{eq:Ham_neg_sec_order_derivative}, together with Neumann-Kirchhoff vertex conditions. Therefore, we always refer to a quantum graph as the triple
$$
\vspace*{-0.2cm}
    \lbrace \text{metric graph } \Gamma, \Hamu, \text{ Neumann-Kirchhoff conditions} \rbrace.
$$

\pagebreak
\section{Quantum Graph Spectra and connection to \linebreak Nonlinear Eigenvalue Problems}\label{sec:quantum_graph_spectra}
%
We are interested in the spectrum of the differential operator $\Ham$ acting on functions in $\domHamNK$. Since the operator $\Ham$ and the vertex coupling conditions in $\domHamNK$ are fixed, the spectrum of $\Ham$ only depends on the metric graph $\Gamma$. Therefore, we also speak of the spectrum of the quantum graph, denoted by $\specquantum$.
If $\Gamma$ is a compact metric graph, the spectrum of $\Ham$ is discrete, that is, it consists exclusively of isolated eigenvalues of finite multiplicity (\cite{Berkolaiko_Kuchment_2013}, Theorem 3.1.1.). 
Moreover, $\Ham$ is positive semidefinite with one simple zero eigenvalue corresponding to a constant eigenfunction.
This can be easily seen when considering the quadratic form of $\Ham$, given by (\cite{Berkolaiko_Kuchment_2013}, Theorem 1.4.11.)
$$
    \mathfrak{h}(u,u) = \sumedges \int_e \left(\ddx{u}\right)^2 \dup x \ge 0.
$$

Altogether, we deduce that the spectrum of a quantum graph consists exclusively of eigenvalues $\eigvalquantum \in \R, \eigvalquantum \geq 0$. Our objective is thus to solve the eigenvalue problem \vspace{-0.2cm}
\begin{equation}\label{eq:eigval_problem_quantum}
	   \Ham \eigfunc = \eigvalquantum \eigfunc
\end{equation}
with a nontrivial eigenfunction $\eigfunc \in \domHamNK$. 
\\

The following theorem from \cite{Kuchment_2003} (Theorem 19), which we slightly modified by giving an explicit characterization of $\bfH$, will be crucial for our further proceedings. 
\begin{thm}\label{th:NEP_vertex}
    Consider $\eigvalquantum \neq \NVEe \text{ for } k \in \N_0$ and $e \in \E$. Then $\eigvalquantum$ is an eigenvalue of $\Gamma$ if and only if there exists a nontrivial $\eigvec \in \R^n$ such that 
\begin{equation}\label{eq:NEP_H}
    \bfH(\eigvalquantum) \eigvec =0
\end{equation}
where the matrix $\bfH(\eigvalquantum) \in \R^{n \times n}$ is given by
\begin{equation}\label{eq:H_matrix}
	\bfH_{ij}(\eigvalquantum) :=
	\begin{cases}
		- \sum_{e \in \E_{v_i}} \cot \left( \eigvalquantumsqrt\, \elle \right) & \text{if } i=j\\
		 \frac{1}{\sin \left(\eigvalquantumsqrt \, \elle \right)} & \text{if } e =(v_i,v_j) \in \E
 		\\
		0 & \text{otherwise}
	\end{cases}. 
\end{equation}	
\end{thm}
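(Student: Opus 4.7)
The natural approach is to solve the edgewise ODE $-u_e'' = \lambda u_e$ explicitly and then encode the Neumann--Kirchhoff conditions into an $n \times n$ linear system whose coefficient matrix depends nonlinearly on $\lambda$. The vertex values $\Phi_i := u(v_i)$ of the eigenfunction will play the role of the unknown vector $\Phi$.

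First I would parametrize each edge $e = (v_i, v_j)$ by $x \in [0, \elle]$ with $x=0$ at $v_i$. On $e$ the general solution of $-u_e'' = \eigvalquantum u_e$ is
\begin{equation*}
u_e(x) = A_e \cos(\eigvalquantumsqrt \, x) + B_e \sin(\eigvalquantumsqrt \, x).
\end{equation*}
The continuity condition \eqref{eq:cont_condition} forces $u_e(0) = \Phi_i$ and $u_e(\elle) = \Phi_j$. Under the standing assumption $\eigvalquantum \neq \NVEe$, we have $\sin(\eigvalquantumsqrt\,\elle) \neq 0$, so $A_e$ and $B_e$ are uniquely determined:
\begin{equation*}
A_e = \Phi_i, \qquad B_e = \frac{\Phi_j - \Phi_i \cos(\eigvalquantumsqrt \, \elle)}{\sin(\eigvalquantumsqrt \, \elle)}.
\end{equation*}

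Next I would compute the outward derivative of $u_e$ at each endpoint. A short calculation gives $\ddx{u_e}(0) = \eigvalquantumsqrt \, B_e$ at $v_i$, and by the symmetry of the cosine formula one finds the same expression at $v_j$ with $\Phi_i, \Phi_j$ swapped, namely $\eigvalquantumsqrt \, (\Phi_i - \Phi_j \cos(\eigvalquantumsqrt\, \elle)) / \sin(\eigvalquantumsqrt \, \elle)$. Summing the outward derivatives over all $e \in \E_{v_i}$ and invoking \eqref{eq:Neum_Kirch_cond}, after dividing by $\eigvalquantumsqrt$, the vertex condition at $v_i$ reads
\begin{equation*}
\sum_{e = (v_i, v_j) \in \E_{v_i}} \frac{\Phi_j}{\sin(\eigvalquantumsqrt \, \elle)} - \Phi_i \sum_{e \in \E_{v_i}} \cot(\eigvalquantumsqrt \, \elle) = 0,
\end{equation*}
which is precisely the $i$-th row of $\bfH(\eigvalquantum)\Phi = 0$ with $\bfH$ as in \eqref{eq:H_matrix}. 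This already proves the forward direction: if $\eigfunc$ is an eigenfunction, then $\Phi := \eigfunc_{\V} \neq 0$ (otherwise all $A_e$ and $B_e$ would vanish and $\eigfunc \equiv 0$) satisfies the NEP.

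For the converse direction, given a nontrivial $\Phi \in \R^n$ with $\bfH(\eigvalquantum)\Phi = 0$, I would define $u_e$ on each edge via the closed-form expressions for $A_e, B_e$ above. By construction $u_e$ solves $-u_e'' = \eigvalquantum u_e$, agrees with $\Phi_v$ at each vertex (so continuity holds), and the reverse of the calculation just carried out shows that the Kirchhoff sum vanishes at every vertex. Since $\Phi \neq 0$, the resulting $u$ is a nontrivial element of $\domHamNK$, hence an eigenfunction of $\Ham$ with eigenvalue $\eigvalquantum$. The only subtle point — the reason the exceptional values $\NVEe$ must be excluded — is the invertibility of the boundary-to-coefficient map on each edge, which fails exactly when $\sin(\eigvalquantumsqrt\,\elle) = 0$; handling such $\eigvalquantum$ requires a separate (Dirichlet-type) analysis and is deliberately outside the scope of this statement.
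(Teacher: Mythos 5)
Your proposal is correct and follows essentially the same route as the paper's own (sketched) proof: solve $-u_e'' = \eigvalquantum u_e$ edgewise, use continuity and $\sin(\eigvalquantumsqrt\,\elle)\neq 0$ to express the edge solution through the vertex values, and translate the Kirchhoff condition into the rows of $\bfH(\eigvalquantum)\eigvec = 0$. The only cosmetic difference is that you solve for the coefficients $A_e, B_e$ directly while the paper rewrites $\eigfunce$ in the symmetric form \eqref{al:vertex_eigfunc_boundary}; your explicit treatment of both implications and of the nontriviality of $\eigvec$ is slightly more complete than the paper's sketch.
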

\begin{proof}
We briefly sketch the proof according to the derivation of Theorem 19 in \cite{Kuchment_2003}.
For $\eigvalquantum > 0$, the solution of the eigenvalue problem \eqref{eq:eigval_problem_quantum} on each edge $e = (v_i,v_j)$ is given by
$$ 
	\eigfunc_e(x) = A_e \cos \left(\eigvalquantumsqrt \, x \right) + B_e \sin\left(\eigvalquantumsqrt \, x\right), \quad x \in [0, \elle] 
$$
 where $A_e, B_e \in \R$ are some edge specific constants. 
 If $\sin( \eigvalquantumsqrt \, \ell_e ) \neq 0$, the continuity condition implies that they can be expressed as
 \begin{align}\label{al:vertex_eigfunc_boundary}
		\eigfunce(x) = \frac{1}{\sin( \eigvalquantumsqrt \, \elle)} \left(\eigvec(v_i) \sin\left(\eigvalquantumsqrt( \elle-x)\right)+\eigvec(v_j)\sin\left(\eigvalquantumsqrt x\right) \right).
	\end{align}
By the current conservation condition \eqref{eq:Neum_Kirch_cond}, we deduce that
 a necessary and sufficient condition for $\eigfunc$ to be a nontrivial solution of the eigenvalue problem is
	\begin{equation}\label{eq:eigfunc_condition}
		\sum_{e \in \E_{v_i}} \eigfunc'_e(v_i) = 
			\sum_{ v_j \sim v_i} \frac{\eigvalquantumsqrt}{\sin(\eigvalquantumsqrt \, \elle)} 
			\left( (\eigvec(v_j) -  \cos\left(\eigvalquantumsqrt\,\elle\right) \eigvec(v_i) \right)  = 0
 	\end{equation}
 	for all $v_i \in \V$ where $\eigvec:= \eigfunc_{\V} \in \R^n$ is the restriction of $\eigfunc$ to vertices. 
 	Summarized in matrix-vector form, we obtain the assertion.
\end{proof}
Hence, Theorem \ref{th:NEP_vertex} implies that the eigenvalues  $\eigvalquantum \neq \NVEe \text{ for } k \in \N_0$ and $e \in \E$ of $\Gamma$ can be found as the solutions of a \emph{nonlinear eigenvalue problem} (NEP). 
In particular, the coefficient matrix $\bfH(\eigvalquantum)$ is only determined at the vertices of the underlying combinatorial graph, i.e., the NEP is of size $n \times n$. 
\begin{rmk}\label{re:nonvertex}
    In the derivation of \eqref{eq:eigfunc_condition}, we had to exclude possible eigenvalues with $\sin( \eigvalquantumsqrt \, \elle ) = 0$ and, consequently, Theorem \ref{th:NEP_vertex} only applies for $\eigvalquantum \neq \NVEe$ for all $k \in \N$ and $e \in \E$. 
    Since the eigenvalue problem in this situation reduces to a problem only defined on the vertices of $\Gamma,$ we will refer to these eigenvalues as \emph{vertex eigenvalues}.
    The remaining eigenvalues of the form $\eigvalquantum = \NVEe$ for $k \in \N$ and $e \in \E$ are referred to as \emph{non-vertex eigenvalues.} 
    To streamline the exposition, we will in this article only cover the vertex eigenvalues. 
    The non-vertex eigenvalues can be treated in a similar way after using a trick of inserting artificial vertices on the edges of $\Gamma,$ which is discussed intensively in \cite{Weller_2023}.
\end{rmk}
The NEP (\ref{eq:NEP_H}) has a nontrivial solution $\eigvec$ whenever $\bfH(\NEPvar)$ is singular, i.e., if $\NEPvar > 0$ is a root of $\det(\bfH(\NEPvar))$. 
To find the roots of $\det(\bfH(\NEPvar))$,
we will concentrate here on Newton methods such as the Newton-trace iteration (\cite{Lancaster_1966})
\begin{equation}\label{eq:New_trace_iteration}
    \NEPvar^{\newtonvar+1} = \NEPvar^{\newtonvar} - \frac{1}{\trace(\bfH^{-1}(\NEPvar^{\newtonvar}) \, \bfH'(\NEPvar^{\newtonvar}))} \,.
\end{equation}
We refer to \cite{Guettel_2017} for a review of other Newton methods applied for NEPs. 
Note that we introduced $\NEPvar$ to differentiate between the variable $\NEPvar$ and the eigenvalue $\eigvalquantum$.
\\
\\
In the rest of this work, the focus is not on the choice of the Newton iteration, but rather on the determination of suitable initial guesses, since these are the key to an efficient application of  \eqref{eq:New_trace_iteration} or a similar iteration.  
Our objective is to apply the eigenvalues of \emph{equilateral approximations} to start the Newton iteration. 
%
%
\pagebreak
\section{Approximation with Equilateral Graphs}\label{sec:equil_approximations}
%
A metric graph $\Gamma$ is called \emph{equilateral}, if all edges have the same length $\elle =: \ell$. 
In this special case, condition \eqref{eq:eigfunc_condition} for $\eigfunc$ to be a nontrivial solution of the eigenvalue problem \eqref{eq:eigval_problem_quantum} can be simplified to 
$$
    \sum_{v_j \sim v_i} \eigvec(v_j) - \cos\left(\eigvalquantumsqrt \, \ell\right)\deg(v_i)\eigvec(v_i) = 0 \quad \forall v_i \in \V.
$$
This leads to the well-known relation to the spectrum of the harmonic Laplacian matrix, see for example \cite{Berkolaiko_Kuchment_2013}, Theorem 3.6.1.
\begin{thm}\label{th:equilateral_spectrum}
	 $\eigvalquantum \neq \NVE$ for $k \in \N$ is an eigenvalue of $\Gamma$ if and only if 
	 $
	    \eigvaldiscrete = 1 - \cos(\eigvalquantumsqrt \, \ell )
	 $
	 is eigenvalue of the harmonic graph Laplacian matrix $\harmLaplG$.
\end{thm}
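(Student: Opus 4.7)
The plan is to obtain the result as a direct corollary of Theorem \ref{th:NEP_vertex} by collapsing the NEP into a linear eigenvalue equation in the equilateral setting. The excerpt already performs the crucial simplification of the vertex condition when all $\elle$ agree, so the remaining work is purely algebraic: recast the scalar identity as a matrix equation and recognize it as the eigenvalue equation of $\harmLaplG$ with the stated spectral correspondence.

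First, I would invoke Theorem \ref{th:NEP_vertex} and observe that, under the equilateral assumption $\elle \equiv \ell$, the family of excluded values $\NVEe$ for $e \in \E$ collapses to the single family $\NVE$, matching the exclusion in the theorem statement. Hence for any admissible $\eigvalquantum$, the theorem guarantees that $\eigvalquantum \in \specquantum$ iff there exists a nontrivial $\eigvec \in \R^n$ such that the simplified condition displayed in the excerpt holds, namely
$$
    \sum_{v_j \sim v_i} \eigvec(v_j) \;=\; \cos(\eigvalquantumsqrt \, \ell)\, \deg(v_i)\, \eigvec(v_i) \quad \forall \, v_i \in \V.
$$

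Second, I would rewrite this componentwise identity in matrix form. The left-hand side is the $i$-th entry of $\Ad \eigvec$ and the right-hand side the $i$-th entry of $\cos(\eigvalquantumsqrt \, \ell)\, \Deg \eigvec$, so the condition becomes $\Ad \eigvec = \cos(\eigvalquantumsqrt \, \ell)\, \Deg \eigvec$. Since $\G$ is connected with $n \ge 2$ (otherwise the statement is vacuous), every vertex has positive degree and $\Deg$ is invertible; multiplying by $\Deg^{-1}$ yields $\Deg^{-1}\Ad \eigvec = \cos(\eigvalquantumsqrt \, \ell)\, \eigvec$. Using $\harmLaplG = \Deg^{-1}\Lapl = I - \Deg^{-1}\Ad$, this rearranges to
$$
    \harmLaplG \eigvec \;=\; \bigl(1 - \cos(\eigvalquantumsqrt \, \ell)\bigr)\, \eigvec,
$$
which is exactly the eigenvalue equation $\harmLaplG \eigvec = \eigvaldiscrete \eigvec$ with $\eigvaldiscrete = 1 - \cos(\eigvalquantumsqrt \, \ell)$.

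Finally, I would emphasize that every step in the chain is a reversible equivalence: the matrix manipulation uses only the invertibility of $\Deg$, and the nontriviality of $\eigvec$ is preserved throughout. So $\eigvec$ is a nontrivial solution of the simplified vertex condition iff it is a nontrivial eigenvector of $\harmLaplG$ for the eigenvalue $\eigvaldiscrete = 1 - \cos(\eigvalquantumsqrt \, \ell)$, and combined with Theorem \ref{th:NEP_vertex} this gives the claimed biconditional. The only subtlety worth flagging is the range check: since $\eigvalquantumsqrt \, \ell \notin \pi \N$ by the exclusion, $\cos(\eigvalquantumsqrt \, \ell) \in (-1,1]$ excluding $-1$ strictly when $\G$ is non-bipartite but allowing $\eigvaldiscrete \in [0,2)$ consistently with the spectral bounds of Theorem \ref{th:eigvals_normalized_laplacian}; this is a sanity check rather than an obstacle. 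I do not foresee any genuinely hard step — the content of the result lies entirely in Theorem \ref{th:NEP_vertex}, and this proof is essentially the observation that the equilateral NEP is linear in the variable $\cos(\eigvalquantumsqrt \, \ell)$.
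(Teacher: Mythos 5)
Your proof is correct and follows essentially the same route as the paper, which likewise specializes the vertex condition \eqref{eq:eigfunc_condition} to the equilateral case and identifies the resulting linear system as the eigenvalue equation for $\harmLaplG$ (the paper cites \cite{Berkolaiko_Kuchment_2013}, Theorem 3.6.1, rather than spelling out the $\Deg^{-1}$ manipulation you make explicit). The only point worth noting is that the theorem's exclusion runs over $k \in \N$ rather than $\N_0$, so $\eigvalquantum = 0$ is admissible here but not covered by Theorem \ref{th:NEP_vertex}; that case is immediate, however, since $\eigvalquantum = 0$ and $\eigvaldiscrete = 0$ are both always eigenvalues with constant eigenvectors.
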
 
Theorem \ref{th:equilateral_spectrum} implies that any vertex eigenvalue $\eigvalquantum$ can be determined by the rule
\begin{equation*}\label{eq:vertex_eigenvalue_rule}
	\eigvalquantum_{\eigvaldiscrete,k}=
	\begin{cases}
		\left( \frac{1}{\ell} \left( \arccos(1-\eigvaldiscrete)+k \pi \right) \right)^2 & \text{for } k \text{ even} 
		\\
		\left( \frac{1}{\ell} \left( \arccos(1-\eigvaldiscrete)-(k+1) \pi \right) \right)^2 & \text{for } k \text{ odd} 
	\end{cases},
\end{equation*}
for $k =1,2,3,\ldots$, i.e., by the solutions of a linear eigenvalue problem. 
Since $\normLapl$ (in contrast to $\harmLaplG$) is symmetric, we will always solve the symmetric eigenvalue problem $\normLapl \, \eigvec = \eigvaldiscrete \,\eigvec$  in the following numerical investigations. Due to the similarity of $\normLapl$ and $\harmLaplG$, the solutions can be transferred to $\harmLaplG$ .
\\

Although the restriction of Theorem \ref{th:equilateral_spectrum} to equilateral graphs is limiting, we will soon see how it is useful for approximating the eigenvalues of non-equilateral graphs. 
We will therefore first define a transformation of a graph which we refer to as \emph{cleaning} (this definition is adapted from \cite{Kurasov_2002}).
\begin{definition}
Let $\Gamma$ be a metric graph and $v \in \V$ be a vertex of degree two with two incident edges, say $e_1=(v,v_1)$ with length $\ell_1$ and $e_2=(v,v_2)$ with length $\ell_2$. Then, the cleaned edge is defined by eliminating the vertex of degree two and combining the two incident edges into one edge $(v_1,v_2)$ of length $\ell_1+\ell_2$.
\end{definition}
Accordingly, we define the cleaned graph $\cleangraph{\Gamma}$ where all vertices of degree two are eliminated by combining their two incident edges. 
Then, Remark 1.4.2. from \cite{Berkolaiko_Kuchment_2013} implies the following important theorem.
\begin{thm}\label{th:evals_cleaned}
The eigenvalues of a metric graph $\Gamma$ agree with the eigenvalues of its cleaned graph $\cleangraph{\Gamma}$.
\end{thm}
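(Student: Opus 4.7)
The plan is to reduce to the case of removing a single degree-two vertex and then iterate, since $\cleangraph{\Gamma}$ is obtained from $\Gamma$ by a finite sequence of such removals. So fix a degree-two vertex $v \in \V$ with incident edges $e_1 = (v,v_1)$ of length $\ell_1$ and $e_2 = (v,v_2)$ of length $\ell_2$, and let $\Gammaext$ be the graph in which these two edges are merged into a single edge $e = (v_1,v_2)$ of length $\ell_1 + \ell_2$. I will exhibit an explicit bijection between eigenfunctions of $\Gamma$ and $\Gammaext$ at the same eigenvalue $\eigvalquantum$, which immediately yields equality of spectra including multiplicities.

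The core observation is the following: at a degree-two vertex, the Neumann-Kirchhoff conditions are equivalent to smoothness across $v$. Indeed, parameterize the merged edge $e$ by $[0, \ell_1 + \ell_2]$ so that $[0, \ell_1]$ corresponds to $e_1$ (with $v_1$ at $0$ and $v$ at $\ell_1$) and $[\ell_1, \ell_1 + \ell_2]$ corresponds to $e_2$ (with $v$ at $\ell_1$ and $v_2$ at $\ell_1 + \ell_2$). The continuity condition \eqref{eq:cont_condition} says $\eigfunc_{e_1}(\ell_1) = \eigfunc_{e_2}(0)$, so the merged function is continuous at $\ell_1$. The current-conservation condition \eqref{eq:Neum_Kirch_cond}, with both derivatives taken away from $v$, reads
$$
    -\eigfunc_{e_1}'(\ell_1^-) + \eigfunc_{e_2}'(\ell_1^+) = 0,
$$
which is precisely the matching of one-sided derivatives at $\ell_1$. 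Thus, under this parameterization, an $H^2$-function on each of $e_1, e_2$ satisfying Neumann-Kirchhoff at $v$ corresponds exactly to an $H^2$-function on $e$; conversely, any $H^2$-function on $e$, when restricted to $[0,\ell_1]$ and $[\ell_1,\ell_1+\ell_2]$, satisfies both conditions at $v$.

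Since $\Ham$ acts as $-\dup^2/\dup x^2$ on each edge and the parameterization above preserves this action (it is a translation), the eigenvalue equation $-\eigfunc'' = \eigvalquantum \eigfunc$ on $e$ is equivalent to the same equation holding on each of $e_1$ and $e_2$ with the matching conditions at $v$. The Neumann-Kirchhoff conditions at the remaining vertices $v_1, v_2$ (and all other vertices) are untouched by the merging, since the set of edges incident to any vertex other than $v$ is the same in $\Gamma$ and $\Gammaext$ (up to identifying $e_1$ or $e_2$ with the appropriate end of $e$). Hence $\eigfunc \in \domHamNK$ on $\Gamma$ with $\Ham \eigfunc = \eigvalquantum \eigfunc$ corresponds bijectively to $\tilde{\eigfunc} \in \domHamNK$ on $\Gammaext$ with $\Ham \tilde{\eigfunc} = \eigvalquantum \tilde{\eigfunc}$, and eigenspaces at $\eigvalquantum$ have the same dimension in the two graphs.

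Applying this single-vertex reduction repeatedly at each degree-two vertex of $\Gamma$ (there are finitely many, since $\Gamma$ is compact), we obtain $\cleangraph{\Gamma}$ after finitely many steps, with the spectrum preserved at each step. The main subtlety to handle carefully is the bookkeeping of directions and parameterizations when translating between the edge-local coordinates on $e_1, e_2$ and the coordinate on the merged edge $e$; once this is set up consistently, both the eigenvalue equation and the vertex conditions transfer by direct inspection.
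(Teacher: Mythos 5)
Your argument is correct and is essentially the paper's own proof: both reduce to a single degree-two vertex and observe that the Neumann--Kirchhoff conditions there are exactly continuity of the function and of its first derivative, so the two $H^2$-pieces glue into one $H^2$-function on the merged edge and vice versa. You merely spell out the parameterization, the sign convention for the derivatives, and the finite iteration more explicitly than the paper does.
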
 
\begin{proof}
We need to show that the eigenvalue problem posed on $\Gamma$ under Neumann-Kirchhoff conditions has the same solutions as the eigenvalue problem posed on $\cleangraph{\Gamma}$.
Let therefore $\eigfunc$ be an eigenfunction of $\Gamma$ and $v$ be a vertex of degree two in $\Gamma$. 
Then, the Neumann-Kirchhoff conditions guarantee the continuity of $\eigfunc$ and its first derivative at $v$, i.e., the two $H^2$-pieces of $\eigfunc$ match into one $H^2$-function on the cleaned edge. 
Thus, each eigenfunction of $\Gamma$ is also a solution of the eigenvalue problem on $\cleangraph{\Gamma}$ and vice versa.
\end{proof}
On the other hand, we can also perform an opposite transformation by inserting artificial vertices of degree two on the edges. We will then speak of \emph{extended graphs}\footnote{\tiny The term extended graph is used in reference to \cite{Arioli_2018}, where discretization nodes along edges are considered as additional vertices.}.
\\

If now, for example, $\ell_e \in \mathbb{N}$, it is possible to represent $\Gamma$ by the extended graph $\Gammaext_{\text{gcd}}$ where we add artificial vertices so that $\Gammaext_{\text{gcd}}$ has equilateral edge length according to the greatest common divisor of the edge lengths $\elle,$ as shown in Figure \ref{fig:extended_equil_graph_representation}. 
Theorem \ref{th:evals_cleaned} implies that $\Gamma$ and $\Gammaext_{\text{gcd}}$ have the same eigenvalues, allowing us to apply Theorem \ref{th:equilateral_spectrum} to the equilateral graph $\tilde{\Gamma}_{\text{gcd}}$ in order to obtain the eigenvalues of $\Gamma$. 
This strategy can be extended to graphs with edge lengths $\ell_e \in \mathbb{Q}$. 
However, if $\elle \not \in \mathbb{Q}$ for at least one edge, such a representation is no longer possible. 
We will instead try to find an approximation of $\Gamma$ by an equilateral graph $\Gammaapprox$. This can be obtained by approximating all edge lengths with values in $\mathbb{Q}$ and then proceeding as described above. We illustrate this procedure in Figure \ref{fig:equilateral_approximation_graph}.
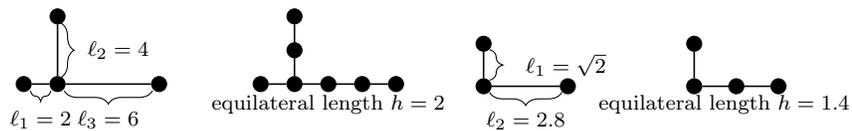
\begin{figure}[h]
	\center
	\begin{subfigure}[]{0.45\textwidth}
		\center
		\usetikzlibrary{decorations.pathreplacing}
		\begin{tikzpicture}[scale=0.45]
		\node[shape=circle,inner sep=0pt, text width=2mm,draw=black,fill=black] (A) at (0,0)  {}; 
		\node[shape=circle,draw=black,inner sep=0pt, text width=2mm,fill=black] (B) at (0,2)  {}; 
		\node[shape=circle,draw=black,inner sep=0pt, text width=2mm,fill=black] (C) at (3,0)  {}; 
		\node[shape=circle,draw=black,inner sep=0pt, text width=2mm,fill=black] (D) at (-1,0) {}; 
		\path [semithick,-] (A) edge node[above=5pt] {} (D);
		\path [semithick,-] (A) edge node[above=5pt] {} (B);
		\path [semithick,-] (A) edge node[above=5pt] {} (C);
		\draw [decorate,decoration={brace,mirror,amplitude=4pt}] (-0.8,-0.2) -- (-0.2,-0.2) node [midway,yshift=-0.15in] {\footnotesize $\ell_1=2$};
		\draw [decorate,decoration={brace,mirror,amplitude=5pt}] (0.2,-0.2) -- (2.8,-0.2) node [midway,yshift=-0.15in] {\footnotesize $\ell_3=6$};
		\draw [decorate,decoration={brace,mirror,amplitude=5pt}] (0.1,0.2) -- (0.1,1.8) node [midway,xshift=0.3in] {\footnotesize $\ell_2=4$};
		\node[shape=circle,inner sep=0pt, text width=2mm,draw=black,fill=black] (A1) at (7,0)  {}; 
		\node[shape=circle,draw=black,inner sep=0pt, text width=2mm,fill=black] (B1) at (7,2)  {}; 
		\node[shape=circle,draw=black,inner sep=0pt, text width=2mm,fill=black] (C1) at (10,0)  {}; 
		\node[shape=circle,draw=black,inner sep=0pt, text width=2mm,fill=black] (D1) at (6,0) {}; 
		\node[shape=circle,inner sep=0pt, text width=2mm,draw=black,fill=black] (AD) at (7,1)  {}; 
		\node[shape=circle,inner sep=0pt, text width=2mm,draw=black,fill=black] (AD) at (8,0)  {}; 
		\node[shape=circle,inner sep=0pt, text width=2mm,draw=black,fill=black] (AD) at (9,0)  {}; 
		\path [semithick,-] (A1) edge node[above=5pt] {} (D1);
		\path [semithick,-] (A1) edge node[above=5pt] {} (B1);
		\path [semithick,-] (A1) edge node[above=5pt] {} (C1);
		\draw (8,-0.6) node {\footnotesize equilateral length $\equilength=2$};
		\end{tikzpicture}
		\caption{Equilateral representation of a graph with $\elle \in \N$.}\label{fig:extended_equil_graph_representation}
	\end{subfigure}
	\begin{subfigure}[]{0.54\textwidth}
		\center
		\usetikzlibrary{decorations.pathreplacing}
		\begin{tikzpicture}[scale=0.4]
		\node[shape=circle,inner sep=0pt, text width=2mm,draw=black,fill=black] (A) at (0,0+0.7)  {}; 
		\node[shape=circle,draw=black,inner sep=0pt, text width=2mm,fill=black] (B) at (0.0,1.4142135623730951+0.7)  {}; 
		\node[shape=circle,draw=black,inner sep=0pt, text width=2mm,fill=black] (C) at (2.8,0+0.7)  {}; 
		\path [semithick,-] (A) edge node[above=5pt] {} (B);
		\path [semithick,-] (A) edge node[above=5pt] {} (C);
		\draw [decorate,decoration={brace,mirror,amplitude=5pt}] (0.2,-0.2+0.7) -- (2.6,-0.2+0.7) node [midway,yshift=-0.15in] {\footnotesize $\ell_2=2.8$};
		\draw [decorate,decoration={brace,amplitude=5pt,mirror}] (0.2,0.2+0.7) -- (0.2,1.4142135623730951-0.2+0.7) node [midway,xshift=+0.4in] {\footnotesize $\ell_1=\sqrt{2}$};
		\node[shape=circle,inner sep=0pt, text width=2mm,draw=black,fill=black] (A) at (7,+0.7)  {}; 
		\node[shape=circle,draw=black,inner sep=0pt, text width=2mm,fill=black] (B) at (7,1.4142135623730951+0.7)  {}; 
		\node[shape=circle,draw=black,inner sep=0pt, text width=2mm,fill=black] (C) at (9.8,0+0.7)  {}; 
		\node[shape=circle,draw=black,inner sep=0pt, text width=2mm,fill=black] (E) at (8.4,0+0.7)  {}; 
		\path [semithick,-] (A) edge node[above=5pt] {} (B);
		\path [semithick,-] (A) edge node[above=5pt] {} (C);
		\draw (8,-0.6+0.7) node {\footnotesize equilateral length $\equilength=1.4$};
		\end{tikzpicture}
	\caption{Equilateral approximation graph.}\label{fig:equilateral_approximation_graph}
	\end{subfigure}
	\caption{Examples of an equilateral representation and approximation.}
\end{figure}
\\
To formalize the idea, we define an equilateral approximation of $\Gamma$ as follows.
\begin{definition}
Let $\Gamma$ be a non-equilateral metric graph with underlying combinatorial graph $\G$ and edge lengths $\ellvec \in \mathbb{R}^m$. 
An \emph{equilateral approximation} of $\Gamma$ is a graph $\Gammaapprox_{\equilength}$ with equilateral edge length $\equilength$ such that $\cleangraph{\Gammaapprox_{\equilength}}$ has underlying combinatorial graph $\G$ and edge lengths $\ellapprox_{\equilength} \approx \ellvec$.
\end{definition}
Accordingly, we define the \emph{distance} between $\Gamma$ and $\Gammaapprox_{\equilength}$.
\begin{definition}
Let $\Gamma$ be a non-equilateral metric graph with edge lengths $\ellvec$ and  $\Gammaapprox_{\equilength}$ be an equilateral approximation with cleaned edge lengths $\ellapprox_{\equilength}$. Then, the \emph{distance} between $\Gamma$ and $\Gammaapprox_{\equilength}$ is defined as
$$
    \dist(\Gamma,\Gammaapprox_{\equilength}) := \|\ellvec - \ellapprox_{\equilength} \|
$$
with $\| \cdot \| = \| \cdot \|_2$. 
\end{definition}
Note that $\Gamma$ and $\cleangraph{\Gammaapprox_{\equilength}}$ have the same edges, i.e., $\ellvec - \ellapprox_{\equilength}$ is well defined. 
In the definition of equilateral approximations, we have intentionally chosen a somewhat vague formulation $\ellapprox_{\equilength} \approx \ellvec$, since it will be a main question arising from this work, how close an approximation must be to deliver appropriate eigenvalue estimates.
Clearly, we can always find $\Gammaapprox_{\equilength}$ with $\dist(\Gamma,\Gammaapprox_{\equilength}) \to 0$ for $\equilength \to 0.$ 
\\

The following types of equilateral approximations are of special interest.
\begin{definition}
An equilateral approximation of $\Gamma$ is referred to as \emph{equilateral floor approximation} $\Gammaapproxfloorh{\equilength}$ if $\ellapprox_{\equilength}-\ellvec \le \bfzero$, i.e., if any edge length of the approximation is shorter than the edge length of the exact graph. 
Equivalently, we use the terminology \emph{equilateral ceil approximation} $\Gammaapproxceilh{\equilength}$ if $\ellapprox_{\equilength}-\ellvec \ge \bfzero$.
\end{definition}
The numerical experiments will reveal that the eigenvalues of the equilateral floor and ceil approximations are upper and lower bounds for the eigenvalues of $\Gamma$. 
In particular, it will become apparent that for decreasing $\equilength$, both bounds approach the eigenvalues of $\Gamma$. 
This motivates the idea to apply the eigenvalues of equilateral floor and ceil approximations as starting values for a Newton iteration.
A key question will be how $\dist(\Gamma,\Gammaapprox_{\equilength})$ and thus $\equilength$ influences the convergence of the Newton method, i.e., how precise the initial guesses must be to guarantee a fast convergence. 
\section{Numerical Results}\label{sec:numerical_results}
%

The presented numerical experiments have been implemented in \texttt{Julia}, version 1.8.0 using example graphs from the \texttt{Graphs} package and eigenvalue algorithms from \texttt{LinearAlgebra} and \texttt{Arpack}. The plots have been generated with \texttt{Plots}.
\subsection{Equilateral Approximations}
For a given non-equilateral metric graph $\Gamma$, we compute equilateral floor and ceil approximations and investigate the approximation properties of their eigenvalues.
The computation of a sequence of equilateral floor approximations with $\dist(\Gamma,\Gammaapprox_{\equilength}) \to 0$ is summarized in Algorithm \ref{alg:floor_equi_approximation} and illustrated in Figure \ref{fig:equi_floor_approximation}.
A sequence of equilateral ceil approximations can be computed equivalently by setting $\noequisubdiv_e = \texttt{ceil}(\elle / \equilength)$ in line \ref{alg:line:floor}. 
 \begin{algorithm}[H]
    \caption{Sequence of Equilateral Floor Approximations}\label{alg:floor_equi_approximation}
    \begin{algorithmic}[1]
		\For{$J=1,2, \ldots$}
		\State Set $h = 2^{-J}$
		\State Initialize $\ellapprox \in \R^m$
		\For{$e \in \E$}
			\State Compute $\noequisubdiv_e = \texttt{floor}(\elle / \equilength)$ \label{alg:line:floor}
			\State Set $(\ellapprox)_e = \equilength \cdot \noequisubdiv_e$
		\EndFor
		\State Compute $\Gammaapprox_{\text{fl},\equilength}$ as extended graph of $\Gamma$ with edge length $\ellapprox_{\equilength}$ 
		\EndFor
	\end{algorithmic}
  \end{algorithm}
\begin{figure}[h]
	\center
	\begin{subfigure}[]{1\textwidth}
		\center
		\usetikzlibrary{decorations.pathreplacing}
		\begin{tikzpicture}[scale=0.8]
		\node[shape=circle,inner sep=0pt, text width=2mm,draw=black,fill=black] (A) at (0,0)  {}; 
		\node[shape=circle,draw=black,inner sep=0pt, text width=2mm,fill=black] (B) at (0,2.4)  {}; 
		\node[shape=circle,draw=black,inner sep=0pt, text width=2mm,fill=black] (C) at (1.8,0)  {}; 
		\path [semithick,-] (A) edge node[above=5pt] {} (B);
		\path [semithick,-] (A) edge node[above=5pt] {} (C);
		\draw [decorate,decoration={brace,mirror,amplitude=5pt}] (0.2,-0.2) -- (1.6,-0.2) node [midway,yshift=-0.15in] {\scriptsize $\ell_1=0.9$};
		\draw [decorate,decoration={brace,mirror,amplitude=5pt}] (0.1,0.2) -- (0.1,2.2) node [midway,xshift=0.3in] {\scriptsize $\,\, \ell_2=1.2$};
		\draw (1,-1.5) node {\scriptsize non-equilateral graph $\Gamma$};
		\node[shape=circle,inner sep=0pt, text width=2mm,draw=black,fill=black] (A1) at (0+4,0)  {}; 
		\node[shape=circle,inner sep=0pt, text width=1mm,draw=black,fill=black] (A1B1) at (0+5,0)  {}; 
		\node[shape=circle,draw=black,inner sep=0pt, text width=1mm,fill=black] (B1) at (0+4,2)  {}; 
		\node[shape=circle,draw=black,inner sep=0pt, text width=1mm,fill=black] (B1C1) at (0+4,2-1)  {}; 
		\path [semithick,-] (A1) edge node[above=5pt] {} (B1);
		\path [semithick,-] (A1) edge node[above=5pt] {} (A1B1);
		\draw [decorate,decoration={brace,mirror,amplitude=5pt}] (0.2+3.8,-0.2) -- (1.8+3.3,-0.2) node [midway,yshift=-0.15in] {\scriptsize $\ellapprox_1=0.5$};
		\draw [decorate,decoration={brace,mirror,amplitude=5pt}] (0.1+4,0.2) -- (0.1+4,1.8) node [midway,xshift=0.3in] {\scriptsize $\,\ellapprox_2=1$};
		\draw (1+4-0.4,-1.5) node {\scriptsize $h=0.5$};
		\node[shape=circle,inner sep=0pt, text width=2mm,draw=black,fill=black] (A2) at (0+7.5,0)  {}; 
		\node[shape=circle,inner sep=0pt, text width=1mm,draw=black,fill=black] (A2B2) at (0+7.5+0.5,0)  {}; 
		\node[shape=circle,inner sep=0pt, text width=1mm,draw=black,fill=black] (A2B2) at (0+7.5+1,0)  {}; 
		\node[shape=circle,inner sep=0pt, text width=1mm,draw=black,fill=black] (A2B2) at (0+7.5+1.5,0)  {}; 
		\node[shape=circle,draw=black,inner sep=0pt, text width=1mm,fill=black] (B22) at (0+7.5,2)  {}; 
		\node[shape=circle,draw=black,inner sep=0pt, text width=1mm,fill=black] (B221) at (0+7.5,1.5)  {}; 
		\node[shape=circle,draw=black,inner sep=0pt, text width=1mm,fill=black] (B222) at (0+7.5,0.5)  {}; 
		\node[shape=circle,draw=black,inner sep=0pt, text width=1mm,fill=black] (B2C2) at (0+7.5,2-1)  {}; 
		\path [semithick,-] (A2) edge node[above=5pt] {} (B22);
		\path [semithick,-] (A2) edge node[above=5pt] {} (A2B2);
		
		\draw [decorate,decoration={brace,mirror,amplitude=5pt}] (0.2+7.5,-0.2) -- (1.8+7.1,-0.2) node [midway,yshift=-0.15in] {\scriptsize $\ellapprox_1=0.75$};
		\draw [decorate,decoration={brace,mirror,amplitude=5pt}] (0.1+7.5,0.2) -- (0.1+7.5,2) node [midway,xshift=0.3in] {\footnotesize $\quad \ellapprox_2=1$};
		\draw (1+7.5-0.2,-1.5) node {\scriptsize $h=0.25$};
		%
		%
		\node[shape=circle,inner sep=0pt, text width=2mm,draw=black,fill=black] (A3) at (0+11,0)  {}; 
		\node[shape=circle,draw=black,inner sep=0pt, text width=1mm,fill=black] (B3) at (0+11,2.22)  {}; 
		\node[shape=circle,draw=black,inner sep=0pt, text width=1mm,fill=black] (C3) at (1.75+11,0)  {}; 
				
		\path [thick,-,dashed] (A3) edge node[above=5pt] {} (B3);
		\path [thick,-,dashed] (A3) edge node[above=5pt] {} (C3);
		
		\draw [decorate,decoration={brace,mirror,amplitude=5pt}] (0.2+11,-0.2) -- (1.6+11,-0.2) node [midway,yshift=-0.15in] {\scriptsize $\ellapprox_1=0.875$};
		\draw [decorate,decoration={brace,mirror,amplitude=5pt}] (0.1+11,0.2) -- (0.1+11,2.22) node [midway,xshift=0.3in] {\footnotesize $\quad \ellapprox_2=1.125$};
		\draw (1+11,-1.5) node {\scriptsize $\equilength=0.125$};
		\draw (0,-1) node {};
		\end{tikzpicture}
	\end{subfigure}
	\caption{Sequence of equilateral floor approximations with $\dist(\Gamma,\Gammaapproxfloorh{\equilength}) \to 0$.}
	\label{fig:equi_floor_approximation}
\end{figure}
In the following experiment, we will consider four different example graphs, each equipped with randomly chosen edge lengths $\elle \in [1,2]$, rounded to three decimal digits. 
The example graphs are a star graph with $n=5$ vertices and $m=4$ edges, a diamond graph with $n=4$ vertices and $m=5$ edges, a cycle graph with $n=4$ vertices and $m=4$ edges, and a Barab\'asi-Albert graph (\cite{Barabasi_1999}) with $n=10$ vertices and $m=16$ edges. 
Since the edge lengths are rounded to three decimal digits, the exact eigenvalues can be computed using an equilateral representation with edge lengths $\equilength = 0.001$.
\\
We compare the first non-zero eigenvalue $\eigvalquantum_2(\Gamma)$ of $\Gamma$ to the first non-zero eigenvalues $\eigvalquantum_2(\Gammaapproxfloorh{\equilength})$, $\eigvalquantum_2(\Gammaapproxceilh{\equilength})$ of the equilateral floor and ceil approximations for $\equilength=2^{-J}$, $J=1,\ldots,8$.
In Figure \ref{fig:non_equi_spectral_gap_floor_ceil}, the resulting distances $\dist(\Gamma,\Gammaapproxfloorh{\equilength})$ and $\dist(\Gamma,\Gammaapproxceilh{\equilength})$ are plotted against the absolute difference of the corresponding eigenvalues. The results illustrate that the eigenvalues of the equilateral approximations converge to the exact eigenvalues for all analyzed examples. 
\begin{figure}[h]
	\centering
	\includegraphics[width=0.49\textwidth]{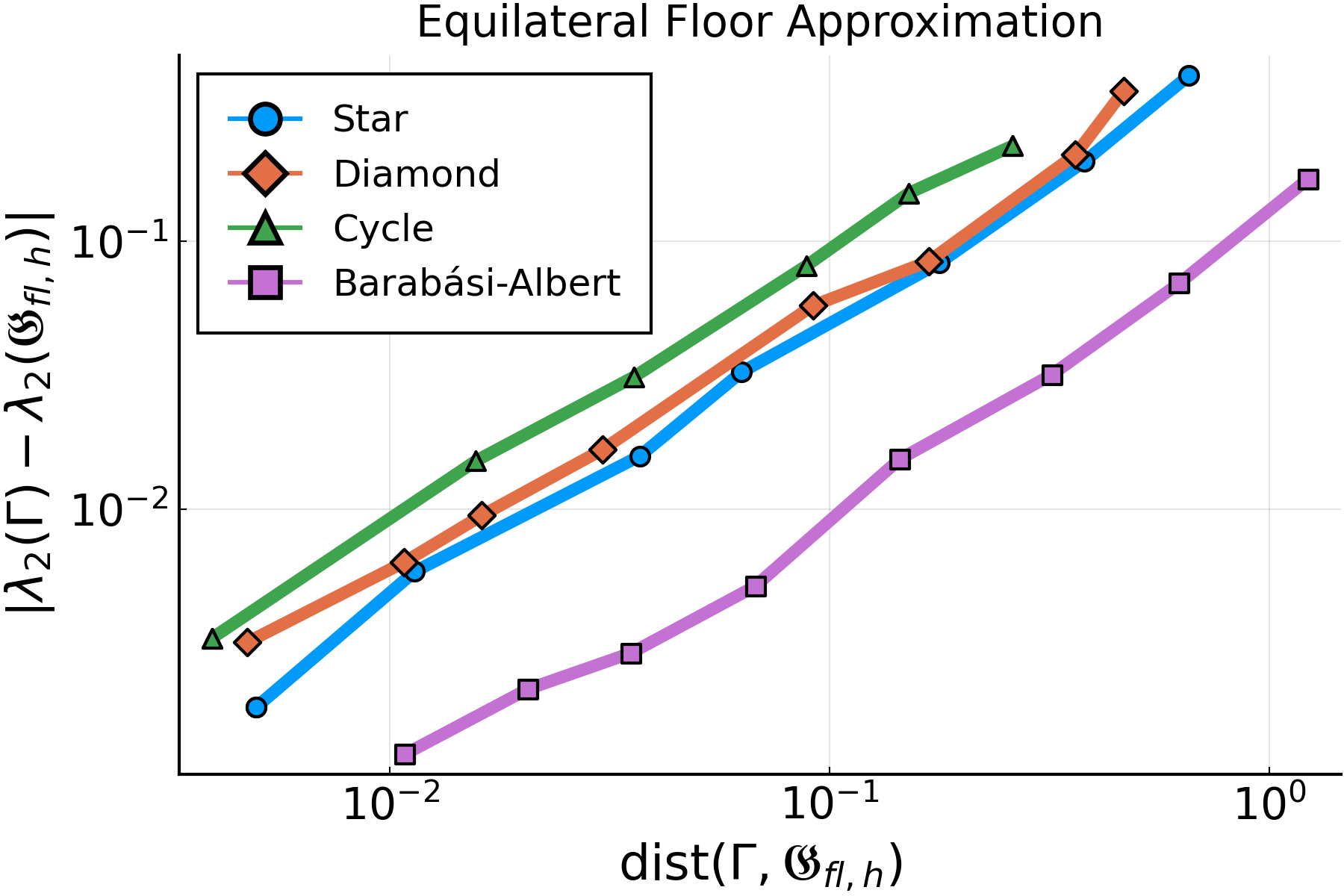}
	\includegraphics[width=0.49\textwidth]{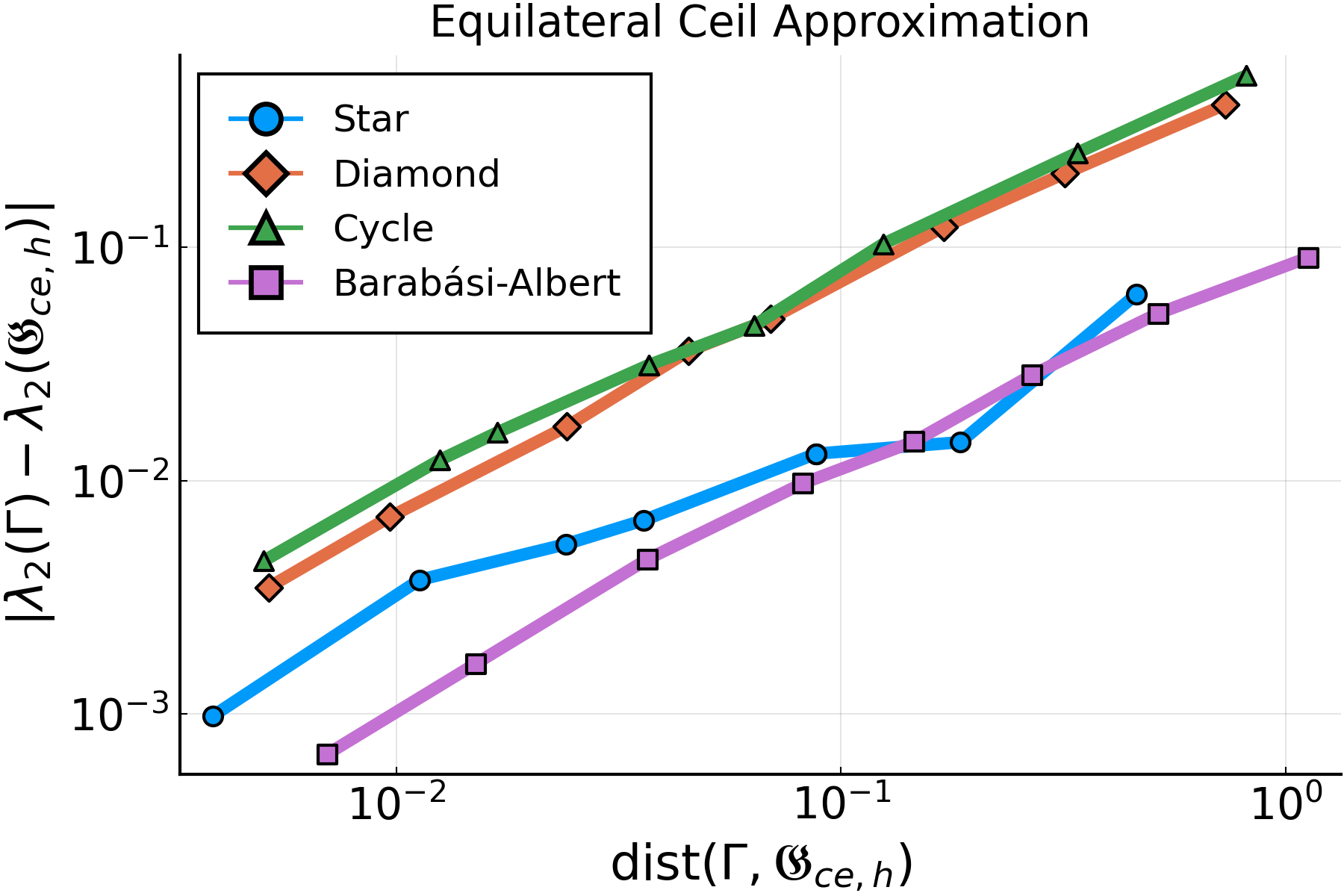}
	\caption{Approximating the spectral gap of the four example graphs via equilateral floor and ceil approximations.}
	\label{fig:non_equi_spectral_gap_floor_ceil}
\end{figure}
\\

Moreover, the first 50 eigenvalues of the exact graphs are compared with the first 50 eigenvalues of $\Gammaapprox_{\text{fl},\equilength}$ and $\Gammaapprox_{\text{ce},\equilength}$. The absolute deviation  $|\eigvalquantum_i(\Gamma)-\eigvalquantum_i(\Gammaapprox_{\equilength})|$ is exemplary illustrated for $\equilength= 2^{-4},2^{-6},2^{-8}$ and all four example graphs in Figure \ref{fig:non_equi_eigvals_floor_ceil_50}.
We observe that for any considered example, the ceil approximations approach the eigenvalues from below while the eigenvalues of the floor approximations are always higher than the exact ones.
Consistent with the results in Figure \ref{fig:non_equi_spectral_gap_floor_ceil}, the accuracy of the eigenvalue estimates increases for decreasing $\equilength$.
\begin{figure}[h]
	\centering
	\includegraphics[width=0.85\textwidth]{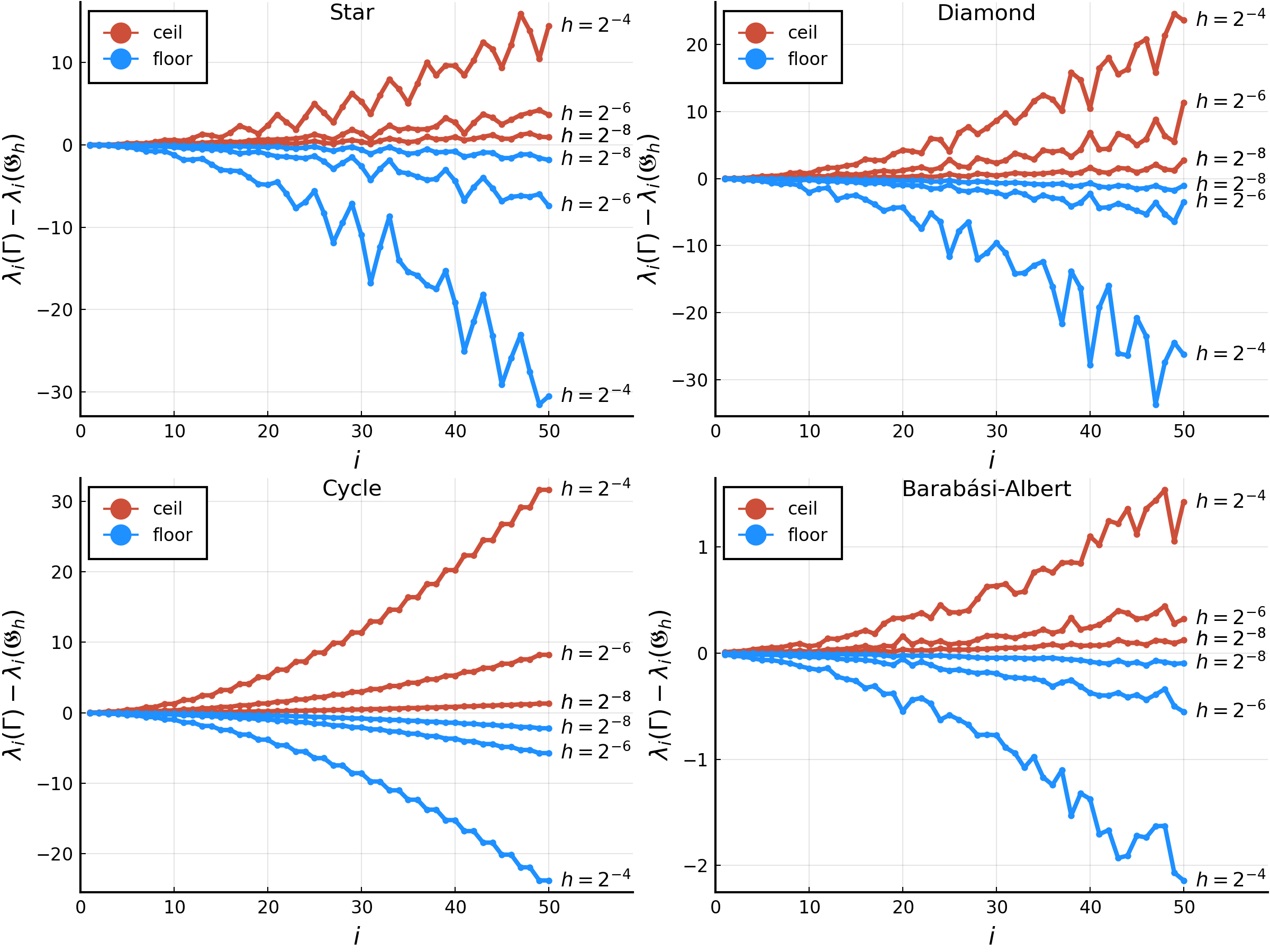}
	\caption{Equilateral floor and ceil approximation of the first 50 eigenvalues of the four example graphs.}
	\label{fig:non_equi_eigvals_floor_ceil_50}
\end{figure}
\subsection{Newton-Trace Iteration}
%
As shown in the previous experiment, the eigenvalues of the equilateral approximations converge to the exact eigenvalues for all analyzed examples. 
However, as mentioned earlier in Section \ref{sec:quantum_graph_spectra}, our aim is not to approximate the exact eigenvalues of a given non-equilateral metric graph $\Gamma$ by computing arbitrary close equilateral approximations, but only to find suitable estimates to start a subsequent Newton iteration. 
\\

The objective of the following experiment is to test the applicability of these eigenvalue estimates as starting values for a Newton iteration as well as the influence of the accuracy of the estimate on the number of required Newton iterations. 
As in the previous section, the experiments will be conducted to compute the first positive eigenvalue $\eigvalquantum_2$.
\\
We will consider two different sample graphs, each with random edge lengths $\elle \in [1,2]$, rounded to two decimal digits. 
These include a star graph with $n = 6$ vertices and $m = 5$ edges and a  Barab\'asi-Albert graph with $n = 50$ vertices and $m = 96$ edges. 
As the edge lengths are rounded to two decimal digits, the exact eigenvalues can be computed through an equilateral representation with edge length $\equilength = 0.01$.
\\
Since we have observed (Figure \ref{fig:non_equi_eigvals_floor_ceil_50}) that the exact eigenvalues lie between the eigenvalues of the floor and ceil approximation, we use the mean of the eigenvalues as the initial guess for the Newton-trace iteration, i.e., 
\begin{equation}\label{eq:lambda_start}
	\eigvalquantum^{\textup{init}} := \frac{\eigvalquantum_2(\Gammaapproxfloorh{\equilength}) + \eigvalquantum_2(\Gammaapproxceilh{\equilength})}{2}.		
\end{equation}
Here, as before, $\lambda_2(\Gammaapproxfloorh{\equilength})$ denotes the first positive eigenvalue of the equilateral floor approximation and $\lambda_2(\Gammaapproxceilh{\equilength})$ of the equilateral ceil approximation. 
In the following experiment, we now want to investigate how many Newton iterations are required when starting with $\lambda^{\textup{init}}$ as defined in $\eqref{eq:lambda_start}$ for various $h=2^{-J}, J=2, \ldots, 6$.\\

As a stopping criterion for the Newton-trace iteration, we conduct the reciprocal condition number of $\bfH(z)$ defined as
$$ 
\kappa^{-1}(\bfH) := \frac{1}{\|\bfH\| \ \|\bfH^{-1}\| }.
$$
Since $\bfH(z)$ is singular at the eigenvalues, a reciprocal condition number close to zero indicates that the Newton-trace iteration converged to an eigenvalue $\eigvalquantum$. 
In the following experiment, we stop the Newton-trace iteration whenever $\kappa^{-1}(\bfH) < 10^{-10}$ or the number of iterations $N_{\textup{iter}}$ exceeds 1000.

\begin{table}[h]
	\centering
	\begin{tabular}{||l || c c c ||c c c ||} 
	 \hline
	  & \multicolumn{3}{|c||}{Star, $\eigvalquantum_2 \approx 0.701372$} & \multicolumn{3}{|c||}{Barab\'asi-Albert, $\eigvalquantum_2 \approx 0.212386$ } \\[0.5ex]
		\hline
	 $\equilength$ & $\lambda^{\text{init}}$ & $N_{\text{iter}}$ & $\eigvalquantum_{\text{NEP}}$ & $\lambda^{\text{init}}$ & $N_{\text{iter}}$ & $\eigvalquantum_{\text{NEP}}$ \\ [0.5ex] 
	 \hline\hline
	 0.25 & 0.785305 & 8 & 0.701372 & 0.221519 & 11 & 0.212386\\ 
	 \hline
	 0.125 & 0.725887 & 4 & 0.701372 & 0.213850 & 4 & 0.212386 \\
	 \hline
	 0.0625 & 0.710570 & 3 & 0.701372  & 0.211420 & 4 & 0.212386  \\
	 \hline
	 0.03125 & 0.698711 & 3 & 0.701372 & 0.212036 & 3 & 0.212386\\
	 \hline
	 0.015625 & 0.704270 & 3 & 0.701372 & 0.212253 & 3 & 0.212386 \\ 
	 \hline 
	\end{tabular}
	\caption{Number of Newton-trace iterations ($N_{\text{iter}}$) required for computing $\eigvalquantum_2$ for the star graph with $n = 6$ vertices and $m = 5$ edges, as well as for the Barab\'asi-Albert graph with $n = 50$ vertices and $m = 96$ edges.}
	\label{tab:number_of_newton_iterations}
\end{table}

In Table \ref{tab:number_of_newton_iterations}, we have recorded the exact eigenvalue $\eigvalquantum_2$, the initial value $\eigvalquantum^{\textup{init}}$ arising from the equilateral approximations with length $\equilength$ as well as the number of required Newton-trace iterations until convergence to $\eigvalquantum_{\textup{NEP}}$.
First of all, it is important to notice that for each initial value $\eigvalquantum^{\textup{init}}$, the Newton-trace iteration converges to the same eigenvalue $\eigvalquantum_{\textup{NEP}}$. 
Given that the $\bfH(z)$-matrix in the Newton-trace iteration is only of size $n \times n$, the number of required iterations is moderate, even for the ``worst'' choice of $\eigvalquantum^{\textup{init}}$. 
As expected from the results in the previous subsections, an decreasing edge length $\equilength$ leads to better initial values, which in turn further reduces the number of required Newton-trace iterations.

%
\subsection{Nested Iteration Approach}
So far we have only studied the convergence towards the smallest non-zero eigenvalue. 
The question is whether further eigenvalues can still be found reliably by equilateral approximations, i.e., if patterns in the spectrum can be reflected sufficiently.
In other words, how can we guarantee that all eigenvalues are found?
\\
Moreover, for the previous experiments, we have consulted example graphs of moderate size and with a limited number of decimal digits for the edge lengths. This guaranteed, that the eigenvalue problem of the equilateral approximations can be easily solved by standard eigenvalue algorithms. 
Other phenomena might occur when considering more complex, large graphs.
In general, for small step sizes $\equilength$, this results in large scale linear eigenvalue problems
\begin{equation}\label{eq:ext_lapl_system}
  	\normLapl_{\equilength} \, \eigvec_{\equilength} = \eigvaldiscrete_{\equilength} \,\eigvec_{\equilength}  
\end{equation}
where $\normLapl_{\equilength}$ is the normalized graph Laplacian matrix of the combinatorial graph of $\Gammaapprox_{\equilength}$.
These problems can quickly exceed a size practicable for a classical eigenvalue solver. 
We therefore propose to follow a \emph{nested iteration approach} for the solution of \eqref{eq:ext_lapl_system}. 
The idea is that the eigenvalues $\eigvalquantum_{\equilength}$ of $\Gammaapprox_{\equilength}$ are good approximations for eigenvalues $\eigvalquantum_{\equilength/2}$ of $\Gammaapprox_{\equilength/2}$. 
Thus, we apply an inverse iteration with shift $1-\cos\left(\sqrt{\eigvalquantum_{\equilength}} \, \equilength/2 \right)$ as eigenvalue solver for
$$
\normLapl_{\equilength/2} \, \eigvec_{\equilength/2} = \eigvaldiscrete_{\equilength/2} \,\eigvec_{\equilength/2}
$$
and so on.
\\

With this nested iteration eigenvalue solver, we will finally consider a random Barab\'asi-Albert graph with $n=500$ vertices and $m=1491$ edges with a randomly assigned edge length $\elle \in [1,5]$. 
For $\equilength = 2^{-J}, J=1,\ldots,8$, we each time compute the $Q=10$ smallest eigenvalues of equilateral floor and ceil approximations $\Gammaapproxceilh{\equilength},\Gammaapproxfloorh{\equilength}$ and run the Newton-trace iteration with initial value $\eigvalquantum_q^{\text{init}} = \frac{1}{2} \left( \eigvalquantum_q(\Gammaapproxfloorh{\equilength}) + \eigvalquantum_q(\Gammaapproxceilh{\equilength}) \right).$
\\
Since $\elle \not \in \mathbb{Q}$, we cannot compute the exact solution.
Instead, we plot the reciprocal condition number of $\bfH(z)$, which roots indicate the eigenvalues as illustrated in the upper plot in Figure \ref{fig:rec_cond_evals}.
\begin{figure}[h]
    \centering
    \includegraphics[width=0.9\textwidth]{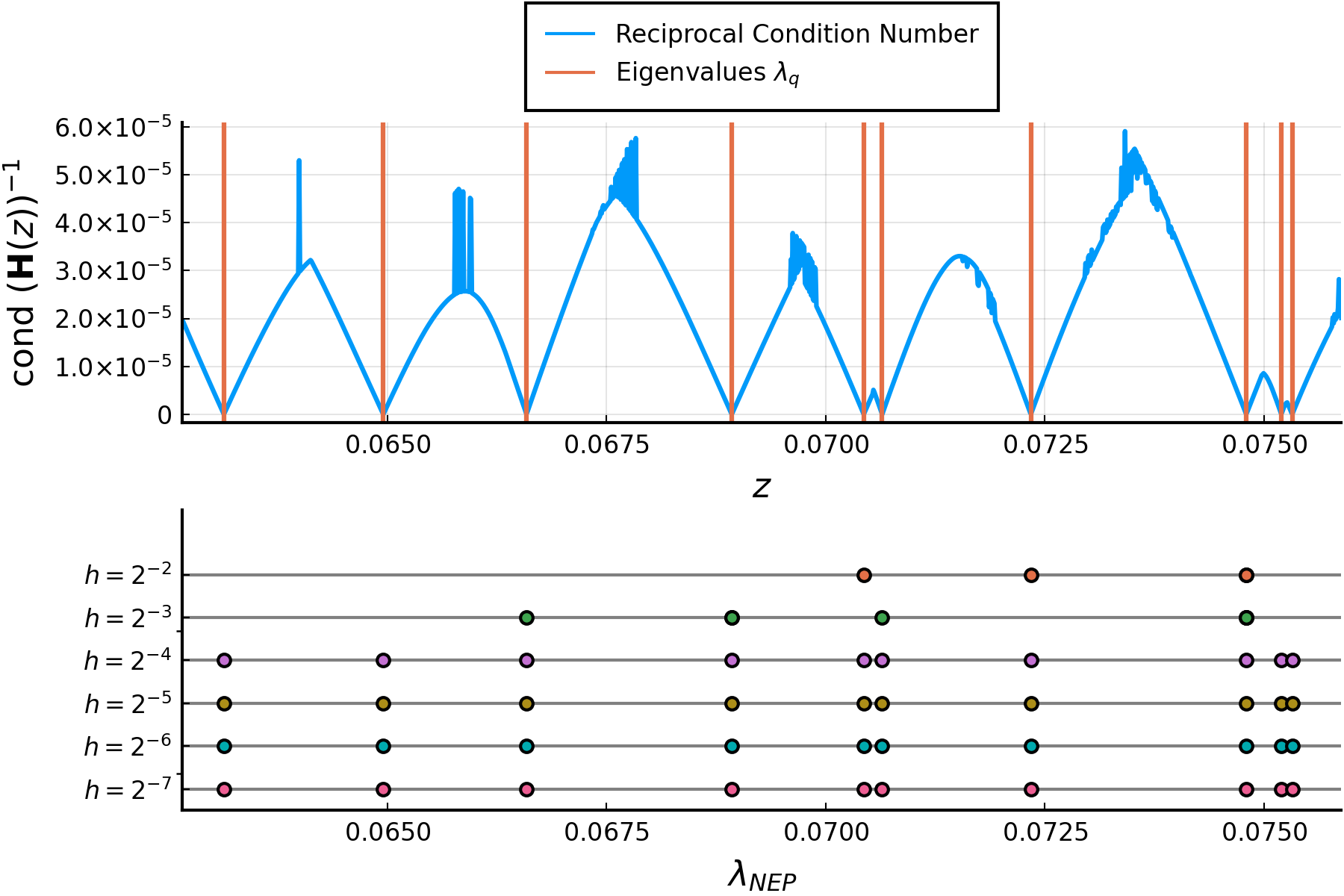}
    \caption{Reciprocal condition number of $\bfH(z)$ for a Barábasi-Albert graph with $n=500$ vertices compared to the computed eigenvalues by equilateral approximations with $\Gammaapproxceilh{\equilength},\Gammaapproxfloorh{\equilength}$ and a subsequent Newton-trace iteration.}
    \label{fig:rec_cond_evals}
\end{figure}
Observe that the spacing between the roots is very irregular. 
In the lower part of Figure \ref{fig:rec_cond_evals}, the eigenvalues found by the Newton iteration with $\eigvalquantum_i^{\text{init}}$ are plotted for the different levels $J=1,\ldots,8$. 
Clearly, a large step size is not sufficient to resolve all patterns in the spectrum, as not all eigenvalues can be found. 
In particular, we observed that this effect appears with large, sparse graphs. 
As a criterion to estimate the quality of the equilateral approximations, we suggest to verify that $\eigvalquantum_q(\Gammaapproxfloorh{\equilength}) > \eigvalquantum_q(\Gammaapproxceilh{\equilength})$ for $q=1,\ldots,Q$.
Finally, even for large graphs, the number of required Newton iterations remains moderate, as shown in Table \ref{tab:Newton_iter}.
\begin{table}[h]
\centering
\begin{tabular}{|c|c|c|c|c|c|c|c|c|c|c|}
\hline
& $\lambda_1$ & $\lambda_2$ & $\lambda_3$ & $\lambda_4$ & $\lambda_5$ & $\lambda_6$ & $\lambda_7$ & $\lambda_8$ & $\lambda_9$ & $\lambda_{10}$ \\ 
\hline
\hline
$\equilength = 2^{-2}$ & - & - & - & - & 11 & - & 9 & 18 & - & - \\ \hline
$\equilength = 2^{-3}$ & - & - & 10 & 16 & - & 3 & - & 24 & - & - \\ \hline
$\equilength = 2^{-4}$ & 4 & 2 & 3 & 3 & 6 & 3 & 4 & 3 & 4 & 3 \\ \hline
$\equilength = 2^{-5}$ & 4 & 3 & 3 & 3 & 3 & 3 & 4 & 3 & 3 & 2 \\ \hline
$\equilength = 2^{-6}$ & 2 & 2 & 3 & 3 & 2 & 2 & 3 & 2 & 3 & 2 \\ \hline
$\equilength = 2^{-7}$ & 2 & 2 & 2 & 2 & 2 & 2 & 1 & 2 & 2 & 2 \\ \hline
\end{tabular}
\caption{Number of required Newton-trace iterations up to convergence to $\eigvalquantum_q$.}\label{tab:Newton_iter}
\end{table}

\pagebreak
\section{Conclusion and Discussion}\label{sec:conclusion}
In this work, we have investigated the spectrum of the negative second order derivative acting on functions on metric graphs. 
Generically, the eigenvalues of such a quantum graph are represented by the solutions of an $n \times n$ nonlinear eigenvalue problem, where $n$ is the number of vertices of the metric graph.
In the special case of equilateral graphs, the NEP further reduces to a linear eigenvalue problem, which can be solved by classical algorithms.
For the numerical solution of the NEP in the general case of non-equilateral graphs, we have developed a procedure to generate estimates of the eigenvalues by approximating the metric graph with equilateral extended graphs.
The eigenvalues of equilateral approximations in turn can be computed as the solutions of the linear eigenvalue problem mentioned above.
These estimates are then applied as initial values in a classical NEP solver, such as a Newton-trace iteration, delivering the solution after a moderate number of iterations, given the approximation is good enough. 
\\
When generating eigenvalue estimates through equilateral approximation, efficient algorithms for solving large scale linear eigenvalue problems on extended graphs are necessary.
We proposed a nested iteration approach to determine initial guesses for an inverse iteration. However, the method can be further improved using, for example, a multigrid method for the solution of the arising systems of linear equations.
\\

Moreover, as part of future work, error estimates for the approximation of quantum graph eigenvalues by equilateral approximations must be addressed. This will be crucial to obtain a measure of accuracy of the estimates. And, as a next step, to derive conditions on the eigenvalue estimates that guarantee a successful application of a Newton iteration.
\\
The numerical experiments in Section \ref{sec:numerical_results} suggest that even fairly rough approximations with $\equilength \approx 10^{-4}$ guarantee the convergence of the Newton-trace iteration to the ``correct'' eigenvalues. However, this might still be very expensive when the scale of the edge lengths of the graph varies greatly. 
\\
Indeed, in a recent work on quantum graphs, Hofmann also studies the estimation of non-equilateral quantum graph eigenvalues by equilateral graphs and proves error estimates in the special case of length preserving approximations, i.e., equilateral approximations with the same total length as the non-equilateral graph of interest \cite{hofmann_2021}. 
However, he does not apply these approximations in the context of NEPs, i.e., as initial guesses for an NEP solver or a related idea.
\\

During this article, we focused on the computation of quantum graph eigenvalues and somewhat neglected the study of the corresponding eigenfunctions. 
Remarkably, it is possible to give an explicit characterization of these eigenfunctions via the solution vectors of the NEP, i.e., the vectors $\eigvec \in \R^n$ with $\bfH(\eigvalquantum) \eigvec = 0$. 
This is due to the fact that the eigenfunctions are completely determined by their values at the vertices\footnote{Indeed, this does not hold for some special non-vertex eigenfunctions, see Remark \ref{re:nonvertex}.
For their treatment, we again refer to \cite{Weller_2023}.}, compare \eqref{al:vertex_eigfunc_boundary}.\\[5mm]

\bibliographystyle{unsrt} 
\bibliography{literature}

\end{document}